\newcommand{\R}{{\mathbb  R}}
\numberwithin{equation}{section}
\newtheorem{thm}{\bf Theorem}[section]
\newtheorem{lem}[thm]{\bf Lemma}
\newtheorem{prop}[thm]{\bf Proposition}
\theoremstyle{remark}
\newtheorem{rem}{\bf Remark}[section]
\begin{document}

\title{Existence and Stability of 3-Cycles in Quadratic Maps}
\author{Dan Com\u anescu\\
{\small Department of Mathematics, West University of Timi\c soara}\\
{\small Bd. V. P\^ arvan, No 4, 300223 Timi\c soara, Rom\^ ania}\\
{\small E-mail address: dan.comanescu@e-uvt.ro}}
\date{}

\maketitle

\begin{abstract}
For a discrete dynamical system on $\R$ generated by a quadratic function, we show, using elementary computations, that the existence, number, and stability of 3-cycles are determined by a single parameter depending on the coefficients of the function.
\end{abstract}

\noindent {\bf Keywords:} quadratic map; 3-cycle; stability; logistic map\\
{\bf MSC Subject Classification 2020: 26A18, 39A22, 39A30, 37E05}

\section{Introduction}

A large number of discrete dynamical systems, both theoretically significant and practically relevant, are defined on the real axis. Despite their apparent simplicity, discrete dynamical systems generated by quadratic functions have been demonstrated to produce highly complex dynamical behaviour. The most extensively studied case is that given by the logistic map. 
The pedagogical value of logistic map has been emphasized in several studies (see, for example, \cite{may} and \cite{saha}), and this observation applies equally to quadratic map. As noted in \cite{saha}, these dynamical systems are considered “accessible” and “exemplary,” illustrating “living mathematics” with clear relevance to scientific applications.

Interpreting chaos as sensitive dependence on initial conditions, Li and Yorke, in \cite{Li}, characterize the chaotic behaviour of a discrete dynamical system on the real line, generated by a continuous function with certain properties, by the existence of periodic points of every order within a finite interval. One consequence of the main result in \cite{Li} is that the existence of a 3-cycle implies the existence of an $N$-cycle for any natural number $N$. A more general version of this result had been established a few years earlier by Sharkovskii, see \cite{Sharkovskii}.

In \cite{katok} the {\it quadratic family} is defined as the collection of functions $g_{\lambda}:\R\to \R$, $g_{\lambda}(x)=\lambda x(1-x)$, where $\lambda$ is a real parameter. In this work, we consider a more general quadratic family consisting of all functions $g:\R\to \R$ of the form $g(x)=ax^2+bx+c$, where $a,b,c\in \R$ and $a\neq 0$.  

In Section 2 we show, using elementary computations, that for a discrete dynamical system on $\R$ generated by a quadratic function, the existence, number, and stability of 3-cycles are determined by a parameter - called the {\bf perturbed discriminant} - which depends on the coefficients of the function. Moreover, the points of any 3-cycle are the roots of a cubic polynomial whose coefficients are functions of the perturbed discriminant.  

In Section 3, we apply the theoretical results developed in Section 2 to two classical families of quadratic maps generated by the following quadratic functions: 
\begin{enumerate}[(I)]
\item $g_c(x)=x^2+c$, where $c\in \R$;  
\item $g_{\lambda}(x)=\lambda x(1-x)$, where $\lambda\in \R$ (logistic maps).   
\end{enumerate}
The dynamics of these maps have been extensively studied in the literature. 

In case (I), we recover the results on the existence and stability of 3-cycles, which can be obtained through elementary computations, as highlighted in the recent work \cite{casu}. 

For the logistic map, our approach provides alternative elementary proofs of the results on 3-cycles, complementing previous studies devoted to their analysis (see  \cite{saha}, \cite{bechhoefer}, \cite{gordon}, \cite{calvis}, \cite{burm}, \cite{zhang}, \cite{casu}).

At the end, the paper contains two appendices. The first is dedicated to the study of certain polynomials that appear in the theoretical results presented. The second appendix recalls some notions and results necessary for this work regarding the Schwarz derivative.

\section{Main results}

Consider the quadratic function $g:\R\to \R$ defined by $g(x)=ax^2+bx+c$, where $a,b,c\in \R$ and $a\neq 0$. The quadratic map generated by $g$ is defined by the recurrent relation:
\begin{equation}\label{r-r-123}
x_{t+1}=g(x_t),\,\,\,\forall t\in \mathbb{N}.
\end{equation}
We say that $x\in \R$ is a periodic point of period $p\in \mathbb{N}^*$ if $g^{(p)}(x)=x$ and $g^{(t)}(x)\neq x$ for all $1\leq t <p$. 

Let
$\mathcal{M}_p:=\{(x_1,\dots,x_p)\in \R^p\,|\,x_i\neq x_j\,\,\text{for all}\,\, 1\leq i<j\leq p\}$ be the set of ordered 
$p$-tuples of pairwise distinct real numbers.
On $\mathcal{M}_p$ we consider the equivalence relation $\sim_p$ defined by: $(x_1,\dots,x_p)\sim_p (y_1,\dots,y_p)$ if and only if $(y_1,\dots,y_p)$ is obtained from $(x_1,\dots,x_p)$ by a cyclic permutation. The corresponding quotient set is denoted by ${\bf M}_p=\mathcal{M}_p\,/\sim_p$.

A $p$-cycle of \eqref{r-r-123} is defined as an element of ${\bf M}_p$ that admits a representative $(x_1,\dots,x_p)\in \mathcal{M}_p$ satisfying  $g(x_1)=x_2, \dots, g(x_{p-1})=x_p, g(x_p)=x_1$.  
In particular, a 3-cycle of \eqref{r-r-123} is represented by the ordered triples $(x,y,z), (y,z,x), (z,x,y)\in \mathcal{M}_3$ which correspond to the same element of ${\bf M}_3$ and satisfy $g(x)=y, g(y)=z, g(z)=x$.

The points of a  3-cycle of \eqref{r-r-123} corresponding to a representative $(x,y,z)\in \mathcal{M}_3$ satisfy the system of equations:
\begin{equation}\label{3-cycle-231}
\begin{cases}
ax^2+bx+c=y \\
ay^2+by+c=z \\
az^2+bz+c=x.
\end{cases}
\end{equation}

It is easy to see that the following result holds.

\begin{prop}
Let $(x, y, z)\in \mathcal{M}_3$. Then there exists a unique quadratic map for which $(x, y, z)$ is a representative of a 3-cycle of \eqref{r-r-123}. The coefficients of this quadratic map are given by
$$
\begin{cases}
a=\frac{x^{2}+y^{2}+z^{2}-x y-x z-y z}{\left(y-z\right) \left(x-z\right) \left(x-y\right)}, \\
b=\frac{x^{3}+y^{3}+z^{3}-x^{2} z-x \,y^{2}-y \,z^{2}}{\left(y-z\right) \left(x-z\right) \left(x-y\right)},\\
c=\frac{x^{3} y+x \,z^{3}+y^{3} z-y^{2} z^{2}-x^{2} y^{2}-x^{2} z^{2}}{\left(y-z\right) \left(x-z\right) \left(x-y\right)}.
\end{cases}$$
\end{prop}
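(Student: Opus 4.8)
The starting point is the observation that, although \eqref{3-cycle-231} looks nonlinear, it is in fact a \emph{linear} system in the three unknowns $(a,b,c)$; all of the nonlinearity sits in the fixed data $x,y,z$. So the plan is to rewrite \eqref{3-cycle-231} as
$$
\begin{pmatrix} x^{2} & x & 1\\ y^{2} & y & 1\\ z^{2} & z & 1\end{pmatrix}
\begin{pmatrix} a\\ b\\ c\end{pmatrix}
=\begin{pmatrix} y\\ z\\ x\end{pmatrix},
$$
and to note that the coefficient matrix is, up to a permutation of its columns, a Vandermonde matrix, so its determinant equals $\pm(x-y)(y-z)(z-x)$. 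Since $(x,y,z)\in\mathcal{M}_{3}$ forces $x,y,z$ to be pairwise distinct, this determinant is nonzero. Hence the system is uniquely solvable, which already delivers both the uniqueness claim and the existence of an (\emph{a priori} possibly degenerate) map $g(t)=at^{2}+bt+c$ with the required cyclic behaviour; moreover any quadratic map realizing $(x,y,z)$ as a $3$-cycle must satisfy this system, so there is at most one.

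To obtain the explicit formulas I would apply Cramer's rule: $a=D_{a}/D$, $b=D_{b}/D$, $c=D_{c}/D$, where $D$ is the Vandermonde determinant above and $D_{a},D_{b},D_{c}$ are obtained by replacing, respectively, the first, second, and third column of the coefficient matrix by the right-hand side $(y,z,x)^{\mathsf{T}}$. Expanding each $3\times3$ determinant — e.g.\ by cofactors along the column of $1$'s, or after the row reductions $R_{2}\mapsto R_{2}-R_{1}$, $R_{3}\mapsto R_{3}-R_{1}$ — and simplifying the resulting symmetric expressions yields $D=(y-z)(x-z)(x-y)$ together with the three numerators displayed in the statement. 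This is the only computational part of the argument, and it is entirely routine; the one place where a little care is needed is tracking the signs produced by the column permutation of the Vandermonde matrix and by the cofactor expansions, so that $D$ and the numerators come out with the stated signs.

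Finally, since the proposition asserts a genuine \emph{quadratic} map, I would record that $a\neq0$. This is immediate from the identity
$$
x^{2}+y^{2}+z^{2}-xy-xz-yz=\tfrac12\big[(x-y)^{2}+(y-z)^{2}+(z-x)^{2}\big],
$$
whose right-hand side is strictly positive because $x,y,z$ are not all equal; consequently $a$ has the same (nonzero) sign as $(y-z)(x-z)(x-y)$. I expect no genuine obstacle here — the proposition is "easy to see," as the text says — the content being essentially the single structural remark that prescribing a $3$-cycle imposes three affine conditions on the coefficient triple, with a nondegenerate (Vandermonde) constraint matrix.
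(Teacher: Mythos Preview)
Your approach is exactly right and is what the paper has in mind: it offers no proof beyond the remark that ``it is easy to see that the following result holds,'' and the linear-system/Vandermonde/Cramer argument you outline is the natural way to make that remark precise. The observation that $a\neq 0$ via the identity $x^{2}+y^{2}+z^{2}-xy-xz-yz=\tfrac12\sum (x-y)^{2}$ is a nice touch the paper does not record.

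One caution, however: you assert that expanding the Cramer determinants ``yields \dots the three numerators displayed in the statement,'' but if you actually carry this out you will find that $D_{b}$ comes out with the \emph{opposite} sign. Concretely,
\[
D_{b}=\det\begin{pmatrix} x^{2} & y & 1\\ y^{2} & z & 1\\ z^{2} & x & 1\end{pmatrix}
= -(x^{3}+y^{3}+z^{3}-x^{2}z-xy^{2}-yz^{2}),
\]
so that $b=D_{b}/D$ is the \emph{negative} of the expression printed in the proposition. A quick sanity check confirms this: for $(x,y,z)=(0,1,3)$ one solves the three interpolation conditions directly to get $a=-7/6$, $b=19/6$, $c=1$, whereas the displayed formula gives $b=-19/6$. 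The formulas for $a$ and $c$ are correct. So your method is sound, but the blanket claim that it reproduces the stated numerators is not; the discrepancy is a typo in the paper's expression for $b$, and you should flag it rather than assert agreement.
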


We introduce the bijective map $T:\mathcal{M}_3\to \R\times (\R\backslash \{0\})\times (\R\backslash \{0,-1\})$, defined by $T(x,y,z)=(x,p,r)$, where $p:=y-x$ and $r:=\frac{z-y}{y-x}$, so that  $r$ represents the {\bf ratio between successive distances}. 

If $(x, y, z)$ is a representative of a 3-cycle of \eqref{r-r-123} and $T(x,y,z)=(x,p,r)$, then 
\begin{equation}\label{abc-xpr-11}
\begin{cases}
a=-\frac{r^{2}+r+1}{p r \left(r+1\right)}, \\
b=\frac{pr^3 + 2pr^2 + 2r^2x + pr + 2rx + p + 2x}{pr(r + 1)}, \\
c=\frac{-p \,r^{3} x+p^{2} r^{2}-p \,r^{2} x-r^{2} x^{2}+p^{2} r-r \,x^{2}-p x-x^{2}}{p r \left(r+1\right)}.
\end{cases}
\end{equation}

In the sequel, we fix the quadratic function $g(x)=ax^2+bx+c$ and study the 3-cycles of \eqref{r-r-123}. {\bf The perturbed discriminant} 
\begin{equation}
\delta:=b^2-4ac-2b-7
\end{equation}
 plays a central role in this study. 

\begin{lem}\label{lemma-3452}
Let $(x,y,z)\in \mathcal{M}_3$ and set $(x,p,r)=T(x,y,z)$. The following statements are equivalent:
\begin{enumerate}[(i)]
\item $(x, y, z)$ is a representative of a 3-cycle of \eqref{r-r-123}.
\item The variables $p$, $x$, and $r$ satisfy
\begin{equation}\label{p-x-4701}
p= -\frac{r^{2}+r+1}{a r \left(r+1\right)},\,\,x=-\frac{b}{2a}+\frac{r^3+2r^2+r+1}{2ar(r+1)},
\end{equation}
and $r$ is a root of the polynomial $P_{\delta}$, defined in \eqref{P-9087}, where $\delta$ denotes the perturbed discriminant.
\end{enumerate}
\end{lem}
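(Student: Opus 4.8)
The plan is to transport the system \eqref{3-cycle-231} through the bijection $T$ and split its three scalar equations into two ``difference'' equations, which determine $p$ and $x$ as functions of $r$ and thereby yield \eqref{p-x-4701}, together with one ``absolute'' equation which, once the first two have been used, collapses to $P_{\delta}(r)=0$.

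First I would substitute the inverse of $T$, namely $y=x+p$ and $z=x+p+pr$, so that \eqref{3-cycle-231} becomes an equivalent system in the unknowns $x,p,r$ with $p\neq 0$ and $r\notin\{0,-1\}$. Subtracting consecutive equations and using $g(u)-g(v)=(u-v)\bigl(a(u+v)+b\bigr)$, the identities $g(x)-g(y)=y-z$ and $g(y)-g(z)=z-x$ become, after dividing by the nonzero quantities $x-y=-p$ and $y-z=-pr$,
\begin{equation*}
2ax+ap+b=r,\qquad -r\bigl(2ax+2ap+apr+b\bigr)=1+r .
\end{equation*}
Feeding the first into the second and then solving for $ap$ (which amounts to dividing by $r(r+1)\neq 0$) gives $ap=-\dfrac{r^{2}+r+1}{r(r+1)}$; substituting this back into the first and dividing by $2a$ gives $x=-\dfrac{b}{2a}+\dfrac{r^{3}+2r^{2}+r+1}{2ar(r+1)}$. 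These are exactly the two identities of \eqref{p-x-4701}, and since $r^{2}+r+1>0$ for every real $r$, the first of them already forces $p\neq 0$, the compatibility condition that makes $(x,p,r)$ lie in the image of $T$.

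Because the three consecutive differences sum identically to zero, the system \eqref{3-cycle-231} is equivalent to those two difference equations together with just one of its original equations, say $ax^{2}+bx+c=y=x+p$. Multiplying the latter by $a$ and writing $X=ax$ turns it into $X^{2}+(b-1)X+ac-ap=0$, i.e.\ $(2X+b-1)^{2}=(b-1)^{2}-4ac+4ap$ after completing the square. Now I substitute $2X+b-1=\dfrac{r^{3}+2r^{2}+r+1}{r(r+1)}-1=\dfrac{r^{3}+r^{2}+1}{r(r+1)}$ and $ap=-\dfrac{r^{2}+r+1}{r(r+1)}$ from \eqref{p-x-4701}, and note that $(b-1)^{2}-4ac=\delta+8$ by the definition of the perturbed discriminant, so that the equation becomes
\begin{equation*}
\left(\frac{r^{3}+r^{2}+1}{r(r+1)}\right)^{\!2}+\frac{4\bigl(r^{2}+r+1\bigr)}{r(r+1)}=\delta+8 .
\end{equation*}
Clearing the denominator $r^{2}(r+1)^{2}\neq 0$ turns this into $P_{\delta}(r)=0$, with $P_{\delta}$ the polynomial of \eqref{P-9087}. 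Since each division performed above was by a quantity shown to be nonzero, the chain can be read backwards to obtain the converse: \eqref{p-x-4701} reproduces the two difference equations and $P_{\delta}(r)=0$ reproduces the remaining one, so $(x,y,z)=T^{-1}(x,p,r)$ is a representative of a $3$-cycle.

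The only genuine difficulty is the bookkeeping in the last step: one must check that after the substitutions the coefficients $a,b,c$ enter the final equation only through the combination $b^{2}-4ac-2b-7$, and that clearing denominators reproduces exactly the polynomial $P_{\delta}$ of \eqref{P-9087}, with the right coefficients and no spurious factor. Keeping $X=ax$ and $ap$ as the working variables, so that $a$ never appears in isolation, and completing the square in $b$ only at the very end, keeps this transparent; alternatively, one may first conjugate $g$ by the affine map $u\mapsto au+\tfrac{b}{2}$ to the normal form $u\mapsto u^{2}+\gamma$ with $\gamma=-\tfrac{\delta+7}{4}$, after which the entire computation involves only the single parameter $\gamma$.
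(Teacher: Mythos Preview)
Your proof is correct; the algebra checks out line by line, and in particular your final displayed equation does clear to exactly $P_{\delta}(r)=0$ with no extraneous factor.

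The paper's own proof is extremely terse: for $(i)\Rightarrow(ii)$ it simply invokes the precomputed formulas \eqref{abc-xpr-11} expressing $a,b,c$ in terms of $x,p,r$, and for $(ii)\Rightarrow(i)$ it says only ``a direct computation verifies \eqref{3-cycle-231}''. Your route is algebraically equivalent but organized differently: rather than first solving \eqref{3-cycle-231} for $a,b,c$ and then inverting, you subtract consecutive equations to peel off two linear relations that pin down $p$ and $x$, and then reduce the one remaining equation to $P_{\delta}(r)=0$. The payoff of your decomposition is that reversibility is manifest---each step is a division by a quantity you have shown to be nonzero---so both implications are handled at once, whereas the paper treats them separately. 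The trick of working in the variables $X=ax$ and $ap$, and recognizing $(b-1)^2-4ac=\delta+8$, is exactly what makes the emergence of the perturbed discriminant transparent; the paper leaves this hidden inside ``follows from \eqref{abc-xpr-11}''.
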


\begin{proof}
The implication $(i)\Rightarrow (ii)$ follows from \eqref{abc-xpr-11}. 

$(ii)\Rightarrow (i)$: Using $T^{-1}$ and the equality $P_{\delta}(r)=0$ we express $c$, $y$ and  $z$ as functions of $a$, $b$, and $r$. A direct computation then verifies that $(x,y,z)$ is a solution of \eqref{3-cycle-231}.
\end{proof}

To determine the points of a 3-cycle, we will use the family of polynomials $(Q_{\beta})_{\beta\in \R}$ defined and studied in Section \eqref{polynomials-0987}. The roots of a polynomial $Q_{\beta}$ are real and satisfy the inequalities $q^{\left<\beta\right>}_{1}<q^{\left<\beta\right>}_{2}<q^{\left<\beta\right>}_{3}$.

\begin{thm}\label{main-th-888}
The number of 3-cycles of the quadratic map \eqref{r-r-123} depends on the sign of the perturbed discriminant $\delta$ as follows:
\begin{enumerate}[(i)]
\item If $\delta<0$, the quadratic map has no 3-cycles.
\item If $\delta=0$, the quadratic map has a unique 3-cycle denoted by $\mathcal{C}_{\left<0\right>}$. 

The points of $\mathcal{C}_{\left<0\right>}$ are given by $x^{\left<0\right>}_{i}
=\frac{-b+1}{2a}+\frac{1}{a q^{\left<0\right>}_{i}}$,
 $i\in\{1,2,3\}.$
\item If $\delta>0$, the dynamical system has exactly two 3-cycles denoted by $\mathcal{C}_{\left<\delta\right>}$ and $\mathcal{C}_{\left<-\delta\right>}$. 

The points of $\mathcal{C}_{\left<\delta\right>}$ are $x^{\left<\delta\right>}_{i}=\frac{-b+\sqrt{\delta}+1}{2a}+\frac{1}{aq^{\left<\sqrt{\delta}\right>}_i}$, $i\in \{1,2,3\}$.

The points of $\mathcal{C}_{\left<-\delta\right>}$ are $x^{\left<-\delta\right>}_ i=\frac{-b-\sqrt{\delta}+1}{2a}+\frac{1}{aq^{\left<-\sqrt{\delta}\right>}_i}$, $i\in \{1,2,3\}$.
\end{enumerate}
\end{thm}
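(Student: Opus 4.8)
The plan is to deduce the whole statement from Lemma~\ref{lemma-3452} together with the facts about the polynomials $P_\delta$ and $Q_\beta$ collected in the appendix, by way of the following counting principle. Since $T$ is a bijection onto $\R\times(\R\setminus\{0\})\times(\R\setminus\{0,-1\})$ and $P_\delta$ vanishes neither at $0$ nor at $-1$ (immediate once $P_\delta$ is written out), Lemma~\ref{lemma-3452} identifies the representatives in $\mathcal M_3$ of 3-cycles of \eqref{r-r-123} with the real roots of $P_\delta$: to a root $r$ corresponds the triple $T^{-1}(x,p,r)$, with $p,x$ read off from \eqref{p-x-4701}. As every 3-cycle has exactly three representatives — its three cyclic permutations, pairwise distinct as tuples — the number of 3-cycles equals one third of the number of distinct real roots of $P_\delta$. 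So I only need to count those roots and to record which root belongs to which cycle.

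First I would record the factorization
\[
P_\delta(r)=\bigl(r^{3}+r^{2}-2r-1\bigr)^{2}-\delta\, r^{2}(r+1)^{2},
\]
so that $P_\delta=Q_{\sqrt\delta}\,Q_{-\sqrt\delta}$ for $\delta\ge0$ (with $P_0=Q_0^{2}$), together with the appendix facts that each $Q_\beta$ has three distinct real roots, none equal to $0$ or $-1$, and that $Q_{\beta_1}-Q_{\beta_2}$ is a nonzero multiple of $r(r+1)$ when $\beta_1\ne\beta_2$. The count then follows at once: if $\delta<0$ then $P_\delta(r)=\bigl(r^{3}+r^{2}-2r-1\bigr)^{2}+|\delta|\,r^{2}(r+1)^{2}>0$ for all real $r$ (a common zero of the two squares would lie in $\{0,-1\}$, where $r^{3}+r^{2}-2r-1$ does not vanish), so there is no 3-cycle; if $\delta=0$ then $P_0=Q_0^{2}$ has the three distinct real roots of $Q_0$, giving one 3-cycle; if $\delta>0$ then $Q_{\sqrt\delta}$ and $Q_{-\sqrt\delta}$ supply $3+3=6$ distinct real roots (disjoint because their difference vanishes only at $0,-1$), giving two 3-cycles.

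To pin down the cycles and their points I would first check, directly from the definition of $T$, that passing from the representative $(x,y,z)$ to $(y,z,x)$ sends $r$ to $\varphi(r):=-\frac{r+1}{r}$; this $\varphi$ is a Möbius map with $\varphi^{3}=\mathrm{id}$ and no real fixed point (its fixed-point equation is $r^{2}+r+1=0$). Next, if $Q_\beta(r)=0$ then $\frac{r^{3}+r^{2}+1}{r(r+1)}=\beta+\frac{2}{r}$, and inserting this into the formula for $x$ in \eqref{p-x-4701} shows that the first coordinate of the representative attached to $r$ equals $\frac{-b+\beta+1}{2a}+\frac{1}{ar}$. Finally, the identity $r^{3}\,Q_\beta\!\bigl(\varphi(r)\bigr)=Q_\beta(r)$ shows that $\varphi$ permutes the three roots of $Q_\beta$, hence — having no fixed point — cycles them, so those three roots form a single $\varphi$-orbit and therefore yield the three representatives of one and the same 3-cycle. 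Calling that cycle $\mathcal C_{\langle0\rangle}$, $\mathcal C_{\langle\delta\rangle}$, $\mathcal C_{\langle-\delta\rangle}$ according as $\beta=0$, $\sqrt\delta$, $-\sqrt\delta$, its points are then the three pairwise distinct numbers $\frac{-b+\beta+1}{2a}+\frac{1}{a q_{i}^{\langle\beta\rangle}}$, $i\in\{1,2,3\}$ (distinct because the $q_{i}^{\langle\beta\rangle}$ are distinct and nonzero), exactly as claimed; and for $\delta>0$ the cycles $\mathcal C_{\langle\delta\rangle}$ and $\mathcal C_{\langle-\delta\rangle}$ are genuinely different since $Q_{\sqrt\delta}$ and $Q_{-\sqrt\delta}$ have disjoint root sets.

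The algebra involved (the factorization of $P_\delta$ and the two identities for $Q_\beta$) is routine. The part I expect to be the real obstacle is the matching in the last step: one must correctly pair the $\sim_3$-identification of representatives on the dynamical side with the order-$3$ action of $\varphi$ on the roots of $P_\delta$, so as to be sure that the three roots of each $Q_\beta$ belong to one cycle rather than being spread over several — this is what turns ``$0/3/6$ real roots'' into ``$0/1/2$ cycles'' with precisely the stated point formulas.
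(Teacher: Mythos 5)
Your proposal is correct and follows essentially the same route as the paper: reduce to the real roots of $P_{\delta}$ via Lemma~\ref{lemma-3452}, factor $P_{\delta}=Q_{\sqrt{\delta}}Q_{-\sqrt{\delta}}$, group the roots of each $Q_{\beta}$ into a single cycle using the cyclic action on the ratio (your identity $r^{3}Q_{\beta}(\varphi(r))=Q_{\beta}(r)$ is exactly the content of Lemma~\ref{Q-prop-995}-(ii)), and read off the point formulas from \eqref{p-x-4701}. Your explicit ``number of cycles $=$ (number of real roots of $P_{\delta}$)$/3$'' bookkeeping just makes the exhaustiveness step, which the paper leaves implicit, more transparent.
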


\begin{proof} $(i)$ We suppose that $(x,y,z)$ is a representative of a 3-cycle of \eqref{r-r-123} and let $(x,p,r)=T(x,y,z)$. By Lemma \ref{lemma-3452}, we obtain that $P_{\delta}(r)=0$ which contradicts Lemma \ref{P-prop-658}-$(i)$.

$(ii)$ From Lemma \ref{P-prop-658}-$(ii)$ we have $P_0=Q_0^2$. Let $r\in \R\backslash\{0,-1\}$ such that $Q_0(r)=0$ and let $p$ and $x$ be given by \eqref{p-x-4701}. By Lemma \ref{lemma-3452} the triple $(x,y,z)=T^{-1}(x,p,r)$ is a representative of a 3-cycle of \eqref{r-r-123}. 

Let $(y,\widetilde{p},\widetilde{r})=T(y,z,x)$ and $(z,\widehat{p},\widehat{r})=T(z,x,y)$. A direct computation yields $\widetilde {r}=\frac{x-z}{z-y}=\frac{x-y}{z-y}-1=-\frac{1}{r}-1$, $\widehat{r}=\frac{y-x}{x-z}=-\frac{1}{r+1}$. 
By Lemma \ref{Q-prop-995}-$(ii)$ the roots of $Q_0$ are precisely $r$, $\widetilde{r}$, and $\widehat{r}$.

Finally, using \eqref{p-x-4701} together with the relation $Q_0(r)=0$, we obtain the announced expressions for the points of the 3-cycle. 

$(iii)$ From Lemma \ref{P-prop-658}-$(ii)$ we have $P_{\delta}=Q_{\sqrt{\delta}}Q_{-\sqrt{\delta}}$. Let $r$ such that $Q_{\sqrt{\delta}}(r)=0$ and let $p$ and $x$ be given by \eqref{p-x-4701}. The triple $(x,y,z)=T^{-1}(x,p,r)$ is a representative of a 3-cycle of \eqref{r-r-123}. If $(y,\widetilde{p},\widetilde{r})=T(y,z,x)$ and $(z,\widehat{p},\widehat{r})=T(z,x,y)$, then the roots of $Q_{\sqrt{\delta}}$ are precisely $r$, $\widetilde{r}$, and $\widehat{r}$. The triples $(x,y,z)$, $(y,z,x)$, and $(z,y,x)$ are representatives of the same 3-cycle, denoted by $\mathcal{C}_{\left<\delta\right>}$. Using \eqref{p-x-4701} and the relation $Q_{\sqrt{\delta}}(r)=0$, we obtain the announced expressions for the points of the 3-cycle. 

Similarly, using the roots of $Q_{-\sqrt{\delta}}$, we obtain the 3-cycle $\mathcal{C}_{\left<-\delta\right>}$ and the expressions for its points.
\end{proof}

\begin{rem}
Let $u<v<w$ be the points of a 3-cycle $\mathcal{C}$ of \eqref{r-r-123}. Then $(u, w, v)$ is a representative of $\mathcal{C}$ if  $a>0$, while  $(u, v, w)$ is a representative if $a<0$.

In our setting, for $\delta\geq 0$, using the ordering of the roots of the polynomial $Q_{\beta}$, we obtain:
\begin{enumerate}[(i)]
\item If $a>0$, then $x^{\left<\pm \delta\right>}_2<x^{\left<\pm \delta\right>}_1<x^{\left<\pm \delta\right>}_3$.
\item If $a<0$, then $x^{\left<\pm \delta\right>}_3<x^{\left<\pm \delta\right>}_1<x^{\left<\pm \delta\right>}_2$.
\end{enumerate}
In both cases the ordered triple $(x^{\left<\pm \delta\right>}_1, x^{\left<\pm \delta\right>}_2, x^{\left<\pm \delta\right>}_3)$ is a representative of the 3-cycle $\mathcal{C}_{\left<\pm\delta\right>}$.
\end{rem}

\begin{rem}\label{polynomials-points-953}
Taking into account the above expressions of the points in $\mathcal{C}_{\left<\delta\right>}$, respectively $\mathcal{C}_{\left<-\delta\right>}$, and the fact that $q^{\left<\pm\sqrt{\delta}\right>}_i$ are the roots of the polynomials $Q_{\pm\sqrt{\delta}}$, we deduce that the points in $\mathcal{C}_{\left<\delta\right>}$, respectively $\mathcal{C}_{\left<-\delta\right>}$, are the roots of the following cubic polynomials:
$$R_{\pm\delta}(X)=X^3+c^{\left<\pm\delta\right>}_2 X^2+c^{\left<\pm\delta\right>}_1 X+c^{\left<\pm\delta\right>}_0,$$
where
$$\begin{cases}
c^{\left<\pm\delta\right>}_2 = \frac{1}{2a}(3b+1\mp\sqrt{\delta}), \\
c^{\left<\pm\delta\right>}_1=\frac{1}{4a^2}(3b^2+2b-9-\delta\mp 2(b+1)\sqrt{\delta}), \\
c^{\left<\pm\delta\right>}_0=\frac{1}{8a^3}(\pm \sqrt{\delta^3}+(1-b)\delta\mp(b^2+2b-7)\sqrt{\delta}+b^3+b^2-9b-1).
\end{cases}
$$
\end{rem}

The ratio between successive distances characterizes whether an ordered $3$-cycle is in $\mathcal{C}_{\left<\delta\right>}$ or $\mathcal{C}_{\left<-\delta\right>}$.

\begin{thm}
Let $(x,y,z)$ be a representative of a $3$-cycle of \eqref{r-r-123} and
$r=\frac{z-y}{y-x}$  the ratio between successive distances.
\begin{enumerate}[(i)]
\item $\delta=0 \Leftrightarrow 
r\in\{q^{\left<0\right>}_{1}, q^{\left<0\right>}_{2}, q^{\left<0\right>}_{3}\}$, where $q^{\left<0\right>}_{1}\approx -1.801$, $q^{\left<0\right>}_{2}\approx -0.445$, $q^{\left<0\right>}_{3}\approx 1.246$.

\item $\delta>0$ and $(x,y,z)\in\mathcal{C}_{\left<\delta\right>}$ if and only if
$
r\in(q^{\left<0\right>}_{1},-1)\cup(q^{\left<0\right>}_{2},0)\cup(q^{\left<0\right>}_{3},\infty).
$

\item $\delta>0$ and $(x,y,z)\in\mathcal{C}_{\left<-\delta\right>}$ if and only if
$
r\in(-\infty,q^{\left<0\right>}_{1})\cup(-1,q^{\left<0\right>}_{2})\cup(0,q^{\left<0\right>}_{3}).
$
\end{enumerate}
\end{thm}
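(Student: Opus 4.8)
The plan is to reduce the whole statement to a single scalar identity expressing the perturbed discriminant as a function of the ratio $r$ alone, followed by an elementary sign analysis. Since $(x,y,z)$ is a representative of a $3$-cycle of the fixed map $g$, Lemma \ref{lemma-3452} gives that $r\in\R\setminus\{0,-1\}$ is a root of the polynomial $P_\delta$ from \eqref{P-9087}. Combining the form of $P_\delta$ with Lemma \ref{P-prop-658} (in particular $P_0=Q_0^2$ and the $\delta$-dependence of $P_\delta$) yields the polynomial identity $P_\delta(r)=Q_0(r)^2-\delta\,r^2(r+1)^2$. Evaluating at our $r$ and using $P_\delta(r)=0$ gives $\delta\,r^2(r+1)^2=Q_0(r)^2$, hence $\delta=F(r)^2$, where $F(r):=\dfrac{Q_0(r)}{r(r+1)}$ is well defined because $r\notin\{0,-1\}$. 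In particular $\delta\ge 0$, and $\delta=0$ exactly when $Q_0(r)=0$.

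Part $(i)$ is then immediate: $\delta=0\iff Q_0(r)=0\iff r\in\{q_1^{\langle 0\rangle},q_2^{\langle 0\rangle},q_3^{\langle 0\rangle}\}$, and the numerical values are the roots of $Q_0$ as computed in the appendix (these are $2\cos\frac{6\pi}{7},\ 2\cos\frac{4\pi}{7},\ 2\cos\frac{2\pi}{7}$). For parts $(ii)$ and $(iii)$ suppose $\delta>0$, so $F(r)\neq 0$ and $\sqrt\delta=|F(r)|$. From the proof of Theorem \ref{main-th-888}$(iii)$, the ratios of the representatives of $\mathcal{C}_{\langle\delta\rangle}$ are exactly the roots of $Q_{\sqrt\delta}$ and those of $\mathcal{C}_{\langle-\delta\rangle}$ are exactly the roots of $Q_{-\sqrt\delta}$; these two root sets are disjoint (a common root would force $r\in\{0,-1\}$, which is impossible), and $(x,y,z)$ is the unique $3$-cycle representative with ratio $r$, so $(x,y,z)\in\mathcal{C}_{\langle\delta\rangle}\iff Q_{\sqrt\delta}(r)=0$. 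Using the explicit form of $Q_\beta$ recorded in the appendix — namely $Q_\beta(r)=Q_0(r)-\beta\,r(r+1)$, consistent with the factorization $P_\delta=Q_{\sqrt\delta}Q_{-\sqrt\delta}$ above — this reads $Q_{\sqrt\delta}(r)=0\iff Q_0(r)=\sqrt\delta\,r(r+1)\iff F(r)=\sqrt\delta>0$, and likewise $Q_{-\sqrt\delta}(r)=0\iff F(r)=-\sqrt\delta<0$. Hence $(x,y,z)\in\mathcal{C}_{\langle\delta\rangle}\iff F(r)>0$ and $(x,y,z)\in\mathcal{C}_{\langle-\delta\rangle}\iff F(r)<0$.

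It then only remains to chart the sign of $F(r)=Q_0(r)/(r(r+1))$ on $\R\setminus\{0,-1\}$. From the numerical values one has the ordering $q_1^{\langle 0\rangle}<-1<q_2^{\langle 0\rangle}<0<q_3^{\langle 0\rangle}$, which splits the line into six intervals; combining the sign of the monic cubic $Q_0$ (negative on $(-\infty,q_1^{\langle 0\rangle})$, positive on $(q_1^{\langle 0\rangle},q_2^{\langle 0\rangle})$, negative on $(q_2^{\langle 0\rangle},q_3^{\langle 0\rangle})$, positive on $(q_3^{\langle 0\rangle},\infty)$) with the sign of $r(r+1)$ (positive on $(-\infty,-1)\cup(0,\infty)$, negative on $(-1,0)$) gives $F(r)>0$ precisely on $(q_1^{\langle 0\rangle},-1)\cup(q_2^{\langle 0\rangle},0)\cup(q_3^{\langle 0\rangle},\infty)$ and $F(r)<0$ precisely on $(-\infty,q_1^{\langle 0\rangle})\cup(-1,q_2^{\langle 0\rangle})\cup(0,q_3^{\langle 0\rangle})$. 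This establishes $(ii)$ and $(iii)$; for the converse implications note that each of these open intervals avoids all roots of $Q_0$, so $Q_0(r)\neq 0$ there and therefore $\delta=F(r)^2>0$ automatically.

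The only point that is not pure bookkeeping is the identity $P_\delta(r)=Q_0(r)^2-\delta\,r^2(r+1)^2$ — equivalently, recognizing that the entire $\delta$-dependence of $P_\delta$ is carried by the single term $-\delta\,r^2(r+1)^2$ — together with fixing the sign in $Q_\beta(r)=Q_0(r)-\beta\,r(r+1)$ so that the roots of $Q_{\sqrt\delta}$ (respectively $Q_{-\sqrt\delta}$) are attached to $\mathcal{C}_{\langle\delta\rangle}$ (respectively $\mathcal{C}_{\langle-\delta\rangle}$); both facts are available from the appendix material. After that, the theorem is a sign chart on six intervals, which must be carried out carefully but presents no conceptual difficulty.
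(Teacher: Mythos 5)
Your proposal is correct and follows essentially the same route as the paper: both reduce the question to the single function $h(r)=Q_0(r)/(r(r+1))$, observing that membership in $\mathcal{C}_{\left<\delta\right>}$ versus $\mathcal{C}_{\left<-\delta\right>}$ is equivalent to $h(r)=\sqrt{\delta}$ versus $h(r)=-\sqrt{\delta}$, i.e.\ to the sign of $h(r)$. The only difference is cosmetic: the paper locates the sign of $h$ via its strict monotonicity on each of the three intervals (from the proof of Lemma \ref{Q-prop-995}), whereas you do a direct sign chart of numerator and denominator on the six subintervals; both are valid and your chart is accurate.
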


\begin{proof}
The existence of a 3-cycle for \eqref{r-r-123} implies that $\delta\ge 0$. By Theorem \ref{main-th-888}, there exists $\beta\in \{\sqrt{\delta}, -\sqrt{\delta}\}$ such that $Q_{\beta}(r)=0$. Using the notation from the proof of Lemma \ref{Q-prop-995}, we obtain that $\beta=h(r)=\frac{Q_0(r)}{r(r+1)}$. Since $h$ is strictly increasing on each interval $(-\infty,-1)$, $(-1,0)$ and $(0,\infty)$, the result follows. 
\end{proof}

\medskip

In what follows, we assume that $\delta\ge 0$ and study the stability of 3-cycles of \eqref{r-r-123}. 
Let $\mathcal{C}$ be a 3-cycle of \eqref{r-r-123} and let $x,y,z$ denote its points. We recall some stability concepts that will be used below. For more details, see \cite{elaydi}.

\noindent $\bullet$ {\it Hyperbolicity}: The cycle $\mathcal{C}$ is said to be hyperbolic if $H(\mathcal{C}):=|g'(x)g'(y)g'(z)|\neq 1$. 

\noindent $\bullet$ {\it Stability}: The cycle $\mathcal{C}$ is stable if one (and, equivalently, any) of its points is a Lyapunov stable fixed point of $g^{(3)}$. More precisely, $\mathcal{C}$ is stable if, for any $\varepsilon>0$, there exists $\delta>0$ such that for all $x_0\in \R$ with $|x_0-x|<\delta$ we have $|g^{(3t)}(x_0)-x|<\varepsilon$ for all $t\in \mathbb{N}$. 

\noindent $\bullet$ {\it Asymptotic stability}: The cycle $\mathcal{C}$ is asymptotically stable if one (and, equivalently, any) of its points is asymptotically stable fixed point of $g^{(3)}$. $\mathcal{C}$ is asymptotically stable if it is stable and there is $\gamma>0$ such that for all $x_0\in \R$ with $|x_0-x|<\gamma$ we have $\lim_{t\to \infty} g^{(3t)}(x_0)=x$.

\begin{lem}\label{hyperbolicity-997}
Let $\delta\ge 0$. Then:
\begin{enumerate}[(i)]
\item If $\delta=0$, the 3-cycle $\mathcal{C}_{\left<0\right>}$ is nonhyperbolic.
\item If $\delta>0$, the 3-cycle $\mathcal{C}_{\langle -\delta \rangle}$ is hyperbolic, while $\mathcal{C}_{\left<\delta\right>}$ is nonhyperbolic if and only if 
$$\delta=\delta_{nh}:=\frac{\left(\left(460+60 \sqrt{201}\right)^{\frac{2}{3}}-2 \left(460+60 \sqrt{201}\right)^{\frac{1}{3}}-80\right)^{2}}{36 \left(460+60 \sqrt{201}\right)^{\frac{2}{3}}}\approx 0.074.$$
\end{enumerate}
\end{lem}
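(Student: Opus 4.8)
The plan is to compute $H(\mathcal{C})=|g'(x)g'(y)g'(z)|$ directly from the explicit description of the cycle points in Theorem \ref{main-th-888}. Since $g'(x)=2ax+b$, the formula $x^{\langle\pm\delta\rangle}_i=\frac{-b\pm\sqrt{\delta}+1}{2a}+\frac{1}{aq^{\langle\pm\sqrt{\delta}\rangle}_i}$ for the points of $\mathcal{C}_{\langle\pm\delta\rangle}$ (when $\delta>0$) gives $g'(x^{\langle\pm\delta\rangle}_i)=\pm\sqrt{\delta}+1+\frac{2}{q^{\langle\pm\sqrt{\delta}\rangle}_i}$, and similarly $g'(x^{\langle0\rangle}_i)=1+\frac{2}{q^{\langle0\rangle}_i}$ when $\delta=0$. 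Writing $\beta\in\{0,\sqrt{\delta},-\sqrt{\delta}\}$ for the relevant value and $q_i=q^{\langle\beta\rangle}_i$ for the roots of $Q_\beta$, this yields
\begin{equation*}
H=\left|\prod_{i=1}^{3}\frac{(\beta+1)q_i+2}{q_i}\right|,
\end{equation*}
so the whole question reduces to evaluating the elementary symmetric functions of the roots of $Q_\beta$.

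Next I would substitute the coefficients of $Q_\beta$ from Section \ref{polynomials-0987} --- equivalently, the relations $q_1q_2q_3=1$, $q_1q_2+q_1q_3+q_2q_3=-(2+\beta)$, and $q_1+q_2+q_3=\beta-1$ --- expand $\prod_i((\beta+1)q_i+2)$ using them, and simplify. The algebra collapses to the identity $\prod_i((\beta+1)q_i+2)=-\beta^3-\beta^2-7\beta+1$, which together with $\prod_i q_i=1$ gives
\begin{equation*}
H(\mathcal{C}_{\langle0\rangle})=1,\qquad H(\mathcal{C}_{\langle\pm\delta\rangle})=\bigl|\beta^3+\beta^2+7\beta-1\bigr|\ \text{with}\ \beta=\pm\sqrt{\delta}.
\end{equation*}
This is statement (i). For $\mathcal{C}_{\langle-\delta\rangle}$, set $w=\sqrt{\delta}>0$ and $\beta=-w$: the cubic $-w^3+w^2-7w-1$ is strictly decreasing (its derivative has negative discriminant) and equals $-1$ at $w=0$, hence is $<-1$ for $w>0$; therefore $H(\mathcal{C}_{\langle-\delta\rangle})=w^3-w^2+7w+1>1$ and this cycle is hyperbolic.

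It remains to analyse $\mathcal{C}_{\langle\delta\rangle}$, where $H(\mathcal{C}_{\langle\delta\rangle})=|w^3+w^2+7w-1|$ with $w=\sqrt{\delta}>0$. Nonhyperbolicity means $w^3+w^2+7w-1=\pm1$; the value $-1$ forces $w(w^2+w+7)=0$, hence $w=0$, which is excluded, so the only possibility is the unique positive root $w_*$ of $w^3+w^2+7w-2=0$ (unique because this cubic is strictly increasing and changes sign on $(0,1)$), and then $\delta_{nh}=w_*^2$. The last step is to put $w_*$ in closed form: the shift $w=v-\tfrac13$ depresses the cubic to $v^3+\tfrac{20}{3}v-\tfrac{115}{27}=0$, Cardano gives $v=\tfrac16\bigl((460+60\sqrt{201})^{1/3}+(460-60\sqrt{201})^{1/3}\bigr)$, and since $(460+60\sqrt{201})(460-60\sqrt{201})=-512000=-80^3$ one replaces $(460-60\sqrt{201})^{1/3}$ by $-80/A$ with $A:=(460+60\sqrt{201})^{1/3}$, so that $w_*=v-\tfrac13=\frac{A^2-2A-80}{6A}$ and
\begin{equation*}
\delta_{nh}=w_*^2=\frac{(A^2-2A-80)^2}{36A^2},\qquad A^2=(460+60\sqrt{201})^{2/3},
\end{equation*}
which is exactly the stated formula. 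The only genuine work here is this Cardano manipulation, together with the attendant check that $w_*$ is indeed the relevant root; the rest is bookkeeping with Vieta's formulas, so that is the step I expect to be the main obstacle.
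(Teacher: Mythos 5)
Your proposal is correct and follows essentially the same route as the paper: the product $\prod_i(2ax_i+b)$ is evaluated via Vieta's formulas for $Q_\beta$ to get $-\beta^3-\beta^2-7\beta+1$ (this is exactly the paper's Lemma \ref{propr-870}), after which hyperbolicity reduces to comparing $|{-}\beta^3-\beta^2-7\beta+1|$ with $1$ for $\beta=0,\pm\sqrt{\delta}$. The only difference is that you carry out explicitly the ``straightforward analysis'' the paper leaves implicit, including the Cardano derivation of the closed form for $\delta_{nh}$, and that part checks out.
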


\begin{proof}
From Lemma \ref{propr-870} we deduce that $H(\mathcal{C}_{\left<\delta\right>})=-\delta^{\frac{3}{2}}-\delta-7\delta^{\frac{1}{2}}+1$ and $H(\mathcal{C}_{\left<-\delta\right>})=\delta^{\frac{3}{2}}-\delta+7\delta^{\frac{1}{2}}+1$. A straightforward analysis proves the stated results.
\end{proof}

\begin{thm}\label{main-stability-777}
For $\delta\ge 0$, the stability of the 3-cycles of \eqref{r-r-123} are characterized as follows. 
\begin{enumerate}[(i)]
\item If $\delta\in (\delta_{nh},\infty)$, then both $\mathcal{C}_{\left<\delta\right>}$ and $\mathcal{C}_{\left<-\delta\right>}$ are unstable.
\item If $\delta\in (0,\delta_{nh}]$, then $\mathcal{C}_{\left<\delta\right>}$ is asymptotically stable, while $\mathcal{C}_{\left<-\delta\right>}$ is unstable.
\item If $\delta=0$, then $\mathcal{C}_{\left<0\right>}$ is unstable. 
\end{enumerate}
\end{thm}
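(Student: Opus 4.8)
The plan is to reduce the stability question to an analysis of the multiplier $H(\mathcal{C})=g'(x)g'(y)g'(z)$, exactly as in Lemma~\ref{hyperbolicity-997}, and then invoke the standard one-dimensional criterion: a fixed point $x_0$ of a $C^1$ map $f$ is asymptotically stable when $|f'(x_0)|<1$ and unstable when $|f'(x_0)|>1$, applied here to $f=g^{(3)}$, whose derivative at a point of the cycle equals $H(\mathcal{C})$ by the chain rule. From Lemma~\ref{hyperbolicity-997} (or rather its proof via Lemma~\ref{propr-870}) we have the explicit formulas $H(\mathcal{C}_{\langle\delta\rangle})=-\delta^{3/2}-\delta-7\delta^{1/2}+1$ and $H(\mathcal{C}_{\langle-\delta\rangle})=\delta^{3/2}-\delta+7\delta^{1/2}+1$. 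So first I would record these, set $s=\sqrt{\delta}\ge 0$, and study the two cubics $\varphi_-(s)=-s^3-s^2-7s+1$ and $\varphi_+(s)=s^3-s^2+7s+1$ on $[0,\infty)$.

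Next I would handle $\mathcal{C}_{\langle-\delta\rangle}$, which should be the easy case: for $s>0$ one checks $\varphi_+(s)=s^3-s^2+7s+1>1$ since $s^3-s^2=s^2(s-1)$ may be negative but is dominated by $7s$ (indeed $s^3-s^2+7s = s(s^2-s+7)$ and $s^2-s+7>0$ always, so $\varphi_+(s)>1$ for all $s>0$); hence $H(\mathcal{C}_{\langle-\delta\rangle})>1$, giving instability for every $\delta>0$. That covers the $\mathcal{C}_{\langle-\delta\rangle}$ assertion in both (i) and (ii). For $\mathcal{C}_{\langle\delta\rangle}$ in case (i), i.e. $\delta>\delta_{nh}$, I would show $\varphi_-(s)<-1$ for $s>s_{nh}:=\sqrt{\delta_{nh}}$: since $\varphi_-$ is strictly decreasing on $[0,\infty)$ (its derivative $-3s^2-2s-7<0$) and, by the definition of $\delta_{nh}$ in Lemma~\ref{hyperbolicity-997}, $|H(\mathcal{C}_{\langle\delta_{nh}\rangle})|=1$ with $\varphi_-(s_{nh})$ already negative (one evaluates its sign numerically as $\approx -1$), monotonicity forces $\varphi_-(s)<-1$ for $s>s_{nh}$, hence $|H|>1$ and instability; for $0<\delta<\delta_{nh}$ the same monotonicity gives $-1<\varphi_-(s)<\varphi_-(0)=1$, i.e. $|H(\mathcal{C}_{\langle\delta\rangle})|<1$, hence asymptotic stability, and at $\delta=\delta_{nh}$ one has $H=-1$ exactly.

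The genuinely delicate points are the boundary/non-hyperbolic cases $\delta=\delta_{nh}$ (where $H(\mathcal{C}_{\langle\delta\rangle})=-1$) and $\delta=0$ (where $H(\mathcal{C}_{\langle 0\rangle})=\varphi_-(0)=1$), since the linear test is inconclusive there. Here I would appeal to the Schwarzian derivative machinery recalled in the second appendix: $g$ is a quadratic, so $Sg<0$ everywhere, and composition preserves negative Schwarzian, so $S(g^{(3)})<0$. The standard result (see \cite{elaydi}, or the appendix) states that a fixed point $x_0$ of a map $f$ with $Sf<0$ and $f'(x_0)=-1$ is asymptotically stable iff $(f^{(2)})'''(x_0)<0$, which for $f'(x_0)=-1$ and $Sf(x_0)<0$ is automatic — so at $\delta=\delta_{nh}$ the cycle $\mathcal{C}_{\langle\delta_{nh}\rangle}$ remains asymptotically stable, consistent with the half-open interval $(0,\delta_{nh}]$ in (ii). For $\delta=0$ with multiplier $+1$: here $g^{(3)}$ has $x^{\langle0\rangle}_i$ as a fixed point with derivative $1$, and the negative-Schwarzian criterion for multiplier $+1$ yields instability (a point with $f'(x_0)=1$ and $Sf(x_0)<0$ is unstable, because the second-order term cannot vanish in a way that creates stability — more precisely $f(x)-x$ changes sign or is one-signed in a manner forcing orbits on one side to move away); alternatively, since $P_0=Q_0^2$ the perturbed discriminant $\delta=0$ is itself a tangent/saddle-node-type degeneracy where the two cycles $\mathcal{C}_{\langle\delta\rangle},\mathcal{C}_{\langle-\delta\rangle}$ have just merged, and such a merged cycle is always unstable. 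I expect that making the $\delta=0$ instability argument fully rigorous — deciding it cleanly from the sign of a higher derivative of $g^{(3)}$ at the cycle point, rather than waving at the saddle-node picture — will be the main obstacle, and I would settle it by a direct computation of $(g^{(3)})''(x^{\langle0\rangle}_i)$ (which is nonzero because the cycle, though non-hyperbolic, is not super-degenerate) together with the elementary fact that a fixed point with derivative $1$ and nonzero second derivative is unstable.
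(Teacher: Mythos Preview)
Your approach is essentially the same as the paper's: multiplier analysis in the hyperbolic regime, negative Schwarzian for the boundary case $\delta=\delta_{nh}$ (multiplier $-1$), and a direct second-derivative test for $\delta=0$ (multiplier $+1$). Two remarks.

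First, your aside that ``a point with $f'(x_0)=1$ and $Sf(x_0)<0$ is unstable'' is not a valid criterion and you were right to abandon it. Indeed, if $f'(x_0)=1$ and $f''(x_0)=0$, then $Sf(x_0)=f'''(x_0)$, so $Sf(x_0)<0$ would actually force \emph{asymptotic stability}, not instability. The correct test at multiplier $+1$ is exactly the one you land on at the end: show $(g^{(3)})''(x^{\langle 0\rangle}_i)\neq 0$ and invoke the standard semi-stable/unstable criterion (Theorem~1.5 in \cite{elaydi}).

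Second, for that last step the paper does not compute $(g^{(3)})''$ by brute force but uses a clean factorization available precisely when $\delta=0$ (i.e.\ $c=\tfrac{b^2-2b-7}{4a}$):
\[
g^{(3)}(x)-x=\frac{1}{256a}\,P_1(x)\,P_2^2(x),
\]
where $P_1$ is the quadratic whose roots are the fixed points of $g$ and $P_2$ is the cubic whose (simple, distinct) roots are the points of $\mathcal{C}_{\langle 0\rangle}$. Differentiating this product twice and evaluating at a root $y$ of $P_2$ gives $(g^{(3)})''(y)=\tfrac{1}{128a}P_1(y)\bigl(P_2'(y)\bigr)^2$, which is nonzero since $y$ is not a fixed point and $P_2$ has simple roots. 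This is the concrete mechanism behind your ``not super-degenerate'' intuition and is what makes the $\delta=0$ case go through cleanly.
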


\begin{proof}
$(i)$ As shown in Lemma \ref{hyperbolicity-997}, we have $|H(\mathcal{C}_{\left<\delta\right>})|=|-\delta^{\frac{3}{2}}-\delta-7\delta^{\frac{1}{2}}+1|>1$ and $|H(\mathcal{C}_{\left<-\delta\right>})|=|\delta^{\frac{3}{2}}-\delta+7\delta^{\frac{1}{2}}+1|>1$.  Hence, by Theorem 1.7, \cite{elaydi}, both $\mathcal{C}_{\left<\delta\right>}$ and $\mathcal{C}_{\left<-\delta\right>}$ are unstable.

$(ii)$ As in the previous case, here too we have $|H(\mathcal{C}_{\left<-\delta\right>})|>1$ which demonstrates the instability of $\mathcal{C}_{\left<-\delta\right>}$. 

If $\delta\in (0,\delta_{nh})$, then $|H(\mathcal{C}_{\left<\delta\right>})|<1$, and by Theorem 1.7, \cite{elaydi}, $\mathcal{C}_{\left<\delta\right>}$ is asymptotically stable. 
In the case $\delta=\delta_{nh}$ the 3-cycle $\mathcal{C}_{\left<\delta\right>}$ is nonhyperbolic and its study is carried out using the Schwarzian derivative (see Appendix \ref{Schwarzian-derivative}). A direct computation shows that $\mathcal{S}g(x)=-\frac{6a^2}{(2ax+b)^2}<0$, and hence $\mathcal{S}g<0$. From Lemma \ref{schwarzin-composition-23} we deduce that 
$\mathcal{S}(g^{(3)})<0$. If $y$ is a point of $\mathcal{C}_{\left<\delta_{nh}\right>}$, then it is a fixed point of $g^{(3)}$. By Theorem 1.6 in \cite{elaydi}, we can conclude that $y$ is asymptotically stable for the discrete dynamical system generated by $g^{(3)}$. Consequently, $\mathcal{C}_{\left<\delta_{nh}\right>}$ is asymptotically stable.

$(iii)$ Lemma \ref{hyperbolicity-997} implies that $\mathcal{C}_{\left<0\right>}$ is nonhyperbolic. In this case, we have $c=\frac{b^2-2b-7}{4a}$ and
$g^{(3)}(x)-x=\frac{1}{256 a}P_1(x)P_2^2(x),$
where
\begin{align*}
P_1(x) & =4a^2x^2+4a(b-1)x+b^2-2b-7, \\
P_2(x) & =8a^3x^3+4a^2(3b+1)x^2+2a(3b^2+2b-9)x+b^3+b^2-9b-1.
\end{align*}
The roots of $P_1$ are equilibria of \eqref{r-r-123}. Let $y$ be a point of $\mathcal{C}_{\left<0\right>}$. Then $P_2(y)=0$, $P_1(y)\neq 0$, $(g^{(3)})'(y)=1$, and $(g^{(3)})''(y)=\frac{1}{128 a}P_1(y)(P_{2}'(y))^2$. Since $P_2$ have three distinct roots, we have that $P_2'(y)\neq 0$. By Theorem 1.5 in \cite{elaydi} it follows that $y$ is an unstable fixed point of the discrete dynamical system generated by $g^{(3)}$. Consequently, $\mathcal{C}_{\left<0\right>}$ is unstable.  
\end{proof}

The stability of $\mathcal{C}_{\left<\delta\right>}$ can be determined by examining the ratio between successive distances.

\begin{thm}
Assume that $\delta>0$. Let $(x,y,z)$ be a representative of $\mathcal{C}_{\left<\delta\right>}$, and let
$r=\frac{z-y}{y-x}$ be the ratio between successive distances. The following statements are equivalent. 
\begin{enumerate}[(i)]
\item $\mathcal{C}_{\left<\delta\right>}$ is asymptotically stable;
\item $r\in (-1.801..,-1.713..]\cup(-0.445.., -0.416..]\cup (1.246.., 1.401..]$.
\end{enumerate}
\end{thm}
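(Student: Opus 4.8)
The strategy is to translate the asymptotic-stability condition on $\mathcal{C}_{\left<\delta\right>}$, already characterized in Theorem \ref{main-stability-777} as $\delta \in (0,\delta_{nh}]$, into an equivalent condition on the ratio $r$. The bridge between the two descriptions is the function $h(r)=\frac{Q_0(r)}{r(r+1)}$ introduced in the proof of Lemma \ref{Q-prop-995} and used in the theorem on the ratio of successive distances: if $(x,y,z)$ represents $\mathcal{C}_{\left<\delta\right>}$, then $\sqrt{\delta}=h(r)$, so $\delta = h(r)^2$. Hence the condition $0<\delta\le \delta_{nh}$ is equivalent to $0<h(r)^2\le \delta_{nh}$, i.e. to $0<h(r)\le\sqrt{\delta_{nh}}$ (recall that on the relevant branches $h(r)=\sqrt{\delta}>0$ for points of $\mathcal{C}_{\left<\delta\right>}$).

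First I would recall from the earlier theorem characterizing membership in $\mathcal{C}_{\left<\delta\right>}$ that $r$ must lie in one of the three intervals $(q^{\left<0\right>}_1,-1)$, $(q^{\left<0\right>}_2,0)$, or $(q^{\left<0\right>}_3,\infty)$, on each of which $h$ is strictly increasing with $h\to 0^+$ as $r$ approaches the left endpoint ($q^{\left<0\right>}_i$, a root of $Q_0$). On each such interval, $h$ is therefore a strictly increasing bijection onto $(0,\infty)$ (or at least onto an interval containing $(0,\sqrt{\delta_{nh}}]$), so the inequality $0<h(r)\le\sqrt{\delta_{nh}}$ cuts out a subinterval $(q^{\left<0\right>}_i, r_i^*]$, where $r_i^*$ is the unique solution of $h(r)=\sqrt{\delta_{nh}}$ in that branch. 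Equivalently, $r_i^*$ is the appropriate root of the cubic equation $Q_0(r)=\sqrt{\delta_{nh}}\,r(r+1)$, i.e. of $Q_{\sqrt{\delta_{nh}}}(r)=0$, so the three values $r_1^*, r_2^*, r_3^*$ are exactly the roots $q^{\left<\sqrt{\delta_{nh}}\right>}_1 < q^{\left<\sqrt{\delta_{nh}}\right>}_2 < q^{\left<\sqrt{\delta_{nh}}\right>}_3$ of $Q_{\sqrt{\delta_{nh}}}$, which by the ordering results of Section \ref{polynomials-0987} land one in each branch. It then remains to identify the numerics: the left endpoints $q^{\left<0\right>}_1\approx-1.801$, $q^{\left<0\right>}_2\approx-0.445$, $q^{\left<0\right>}_3\approx 1.246$ are already recorded, and $q^{\left<\sqrt{\delta_{nh}}\right>}_1\approx -1.713$, $q^{\left<\sqrt{\delta_{nh}}\right>}_2\approx -0.416$, $q^{\left<\sqrt{\delta_{nh}}\right>}_3\approx 1.401$ are obtained by plugging the explicit value of $\delta_{nh}$ from Lemma \ref{hyperbolicity-997} into the cubic $Q_{\sqrt{\delta_{nh}}}$ and solving numerically. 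Assembling the three half-open intervals gives exactly statement (ii).

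The one subtlety to be careful about — and the main obstacle — is matching the open/closed endpoints correctly. The left endpoints must be excluded: there $h(r)=0$, hence $\delta=0$, which is the case $\mathcal{C}_{\left<0\right>}$ (a single nonhyperbolic, unstable $3$-cycle), not $\mathcal{C}_{\left<\delta\right>}$ with $\delta>0$; so the hypothesis $\delta>0$ forces $r\ne q^{\left<0\right>}_i$, consistent with the open left endpoints in (ii). The right endpoints must be included because $\delta=\delta_{nh}$ yields an asymptotically stable (nonhyperbolic) $3$-cycle by Theorem \ref{main-stability-777}(ii); this is why the intervals are closed on the right at $q^{\left<\sqrt{\delta_{nh}}\right>}_i$. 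I would also double-check monotonicity of $h$ on each branch and that the value $\sqrt{\delta_{nh}}$ is actually attained on each of the three branches (guaranteed by $Q_{\sqrt{\delta_{nh}}}$ having exactly one root in each branch, which follows from the structural results on the $Q_\beta$ family). Everything else is a direct consequence of the strict monotonicity of $h$ together with the already-established equivalence $\delta=h(r)^2$ and the stability dichotomy of Theorem \ref{main-stability-777}.
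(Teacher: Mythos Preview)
Your argument is correct and follows exactly the paper's approach: reduce asymptotic stability to $\delta\in(0,\delta_{nh}]$ via Theorem \ref{main-stability-777}, use the identity $\sqrt{\delta}=h(r)=\frac{Q_0(r)}{r(r+1)}$, and invoke the strict monotonicity of $h$ on each of the three branches to obtain the half-open intervals with left endpoints the roots of $Q_0$ and right endpoints the solutions of $h(r)=\sqrt{\delta_{nh}}$. Your additional remark that these right endpoints are precisely the roots of $Q_{\sqrt{\delta_{nh}}}$ and your careful discussion of the open/closed endpoint convention are accurate refinements of what the paper states more tersely.
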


\begin{proof} By Theorem \ref{main-stability-777} the 3-cycle $\mathcal{C}_{\left<\delta\right>}$ is asymptotically stable if and only if $\delta\in (0,\delta_{nh}]$. 
Using the notation from the proof of Lemma \ref{Q-prop-995}, we have $\sqrt{\delta}=h(r)=\frac{Q_0(r)}{r(r+1)}$. Since $h$ is strictly increasing on each interval $(-\infty,-1)$, $(-1,0)$ and $(0,\infty)$, it follows that $\delta\in (0,\delta_{nh}]$ if and only if $r\in (q^{\left<0\right>}_1, r_{nh,1}]\cup (q^{\left<0\right>}_2, r_{nh,2}]\cup (q^{\left<0\right>}_3, r_{nh,3}]$, where $q^{\left<0\right>}_1, q^{\left<0\right>}_2, q^{\left<0\right>}_3$ are the roots of $Q_0$, and $r_{nh,1}=-1.713..$, $r_{nh,2}=-0.416..$, and $r_{nh,3}=1.401..$ are the solutions of the equation $h(x)=\sqrt{\delta_{nh}}$.
\end{proof}

\section{Selected classical cases}

{(\bf I)} The discrete dynamical system 
\begin{equation}\label{casu-910}
z_{t+1}=z_t^2+c, \,\,\,\forall t\in \mathbb{N}
\end{equation}
originates in the early 20th-century work of Gaston Julia and Pierre Fatou on the iteration of rational functions, which led to the concepts of Julia and Fatou sets (see \cite{julia}, \cite{fatou}, and \cite{milnor}).
Benoît Mandelbrot later revealed the global structure of this family through computer experimentation, introducing the Mandelbrot set and establishing the system as a central object in complex dynamics (see \cite{mandelbrot} and \cite{hubbard}).
In \cite{casu}, the 3-cycles of the discrete dynamical system \eqref{casu-910} are analysed over the set of real numbers.

In this case, the perturbed discriminant is $\delta=-4c-7$. From Theorem \ref{main-th-888}, we obtain the following classification:
\begin{itemize}
\item If $c>-\frac{7}{4}$, then \eqref{casu-910} has no 3-cycles.
\item If $c=-\frac{7}{4}$, then \eqref{casu-910} has a unique 3-cycle, denoted by $\mathcal{C}_{\left<0\right>}$, which consists of the following points: $x^{\left<0\right>}_1=\frac{1}{2}+\frac{1}{q^{\left<0\right>}_1}\approx -0.054$, $x^{\left<0\right>}_2=\frac{1}{2}+\frac{1}{q^{\left<0\right>}_2}\approx -1.746$, and $x^{\left<0\right>}_3=\frac{1}{2}+\frac{1}{q^{\left<0\right>}_3}\approx 1.301$. 
\item If $c<-\frac{7}{4}$, then \eqref{casu-910} has two 3-cycles, denoted by $\mathcal{C}_{\left<-4c-7\right>}$ and $\mathcal{C}_{\left<4c+7\right>}$. 

The points of $\mathcal{C}_{\left<-4c-7\right>}$ are given by $x^{\left<-4c-7\right>}_i=\frac{\sqrt{-4c-7}+1}{2}+\frac{1}{q^{\left<\sqrt{-4c-7}\right>}_i}$. 

The points of $\mathcal{C}_{\left<4c+7\right>}$ are given by $x^{\left<4c+7\right>}_i=\frac{-\sqrt{-4c-7}+1}{2}+\frac{1}{q^{\left<-\sqrt{-4c-7}\right>}_i}$. 
\end{itemize}

By Remark \ref{polynomials-points-953} we can write the polynomials with the roots the points of 3-cycles.
\begin{itemize}
\item $R_0(X)=X^{3}+\frac{1}{2} X^{2}-\frac{9}{4} X -\frac{1}{8}$ is the polynomial which has the roots exactly the points of $\mathcal{C}_0$.
\item $R_{-4c-7}(X)=X^{3}+\left(\frac{1}{2}-\frac{\sqrt{-4 c -7}}{2}\right) X^{2}+\left(-\frac{1}{2}+c +\frac{\sqrt{-4 c -7}}{2}\right) X -\frac{\sqrt{-4 c -7}\, c}{2}-\frac{c}{2}-1$ is the polynomial which has the roots exactly the points of $\mathcal{C}_{\left<-4c-7\right>}$.
\item $R_{4c+7}(X)=X^{3}+\left(\frac{1}{2}+\frac{\sqrt{-4 c -7}}{2}\right)X^{2} +\left(-\frac{1}{2}+c +\frac{\sqrt{-4 c -7}}{2}\right) X +\frac{\sqrt{-4 c -7}\, c}{2}-\frac{c}{2}-1$ is the polynomial which has the roots exactly the points of $\mathcal{C}_{\left<4c+7\right>}$.
\end{itemize}

By Lemma \ref{hyperbolicity-997}, the cycle $\mathcal{C}_{\delta}$ is nonhyperbolic if and only if $c=-\frac{7}{4}$ or $c=c_{nh}\approx -1.768$.
When $c\leq -\frac{7}{4}$, Lemma \ref{main-stability-777} implies that the stability of the 3-cycles of \eqref{casu-910} is as follows:
\begin{itemize}
\item If $c\in (-\infty, c_{nh})$, then both  $\mathcal{C}_{\left<-4c-7\right>}$ and $\mathcal{C}_{\left<4c+7\right>}$ are unstable.
\item If $c\in [c_{nh}, -\frac{7}{4})$, then $\mathcal{C}_{\left<-4c-7\right>}$ is asymptotically stable and $\mathcal{C}_{\left<4c+7\right>}$ is unstable.
\item If $c=-\frac{7}{4}$, then $\mathcal{C}_{\left<0\right>}$ is unstable. 
\end{itemize}
Above, we have recovered the results presented in \cite{casu} and have found new formulas for the points of the 3-cycles of \eqref{casu-910} using the roots of the polynomials $Q_{\sqrt{-4c-7}}$ and $Q_{-\sqrt{-4c-7}}$.
 \medskip

{(\bf II)} The logistic map is perhaps the most widely recognized example of a simple nonlinear system. Its origins trace back to P. F. Verhulst, who formulated an elementary model of population growth (see \cite{verhulst}). Over a century later, R. May revisited the model in a discrete form, showing that even such a simple iterative process can produce highly intricate and unpredictable behaviour (see \cite{may}). May’s work was instrumental in establishing the logistic map as a central example in the study of dynamical systems.

The logistic map on $\R$ is defined by the recurrence relation
\begin{equation}\label{logistic-9945}
x_{t+1}=\lambda x_t (1-x_t), \,\,\,t\in \mathbb{N},
\end{equation}
where $\lambda$ is a real parameter. 
The perturbed discriminant is $\delta=\lambda^2-2\lambda-7$ and, by using Theorem \ref{main-th-888}, we obtain the following classification:
\begin{itemize}
\item If $\lambda\in (1-2\sqrt{2},1+2\sqrt{2})$, then \eqref{logistic-9945} has no 3-cycles.
\item If $\lambda\in \{1-2\sqrt{2},1+2\sqrt{2}\}$, then \eqref{logistic-9945} has a unique 3-cycle, denoted by $\mathcal{C}_{\left<0\right>}$. The points of this cycle are:
\begin{itemize}
\item for $\lambda=1+2\sqrt{2}$,  $x^{\left<0\right>}_1\approx 0.514, x^{\left<0\right>}_2\approx 0.956, x^{\left<0\right>}_3\approx 0.159$.
\item for $\lambda=1-2\sqrt{2}$,  $x^{\left<0\right>}_1\approx 0.469, x^{\left<0\right>}_2\approx -0.455, x^{\left<0\right>}_3\approx 1.212$.
\end{itemize}

\item If $\lambda\in (-\infty, 1-2\sqrt{2})\cup (1+2\sqrt{2}, \infty)$, then \eqref{logistic-9945} has two 3-cycles, denoted by $\mathcal{C}_{\left<\lambda^2-2\lambda-7\right>}$ and $\mathcal{C}_{\left<-\lambda^2+2\lambda+7\right>}$. 

The points of $\mathcal{C}_{\left<\lambda^2-2\lambda-7\right>}$ are given by $x^{\left<\lambda^2-2\lambda-7\right>}_i=\frac{\lambda-\sqrt{\lambda^2-2\lambda-7}-1}{2\lambda}-\frac{1}{\lambda q^{\left<\sqrt{\lambda^2-2\lambda-7}\right>}_i}$. 

The points of $\mathcal{C}_{\left<-\lambda^2+2\lambda+7\right>}$ are given by $x^{\left<-\lambda^2+2\lambda+7\right>}_i=\frac{\lambda+\sqrt{\lambda^2-2\lambda-7}-1}{2\lambda}-\frac{1}{\lambda q^{\left<-\sqrt{\lambda^2-2\lambda-7}\right>}_i}$.  
\end{itemize}

By Remark \ref{polynomials-points-953} we can write the polynomials with the roots the points of 3-cycles. Due to the large size of the expressions of these polynomials, we omit writing them.

By Lemma \ref{hyperbolicity-997}, the cycle $\mathcal{C}_{\left<\delta\right>}$ is nonhyperbolic if and only if $\lambda\in \{1-2\sqrt{2},1+2\sqrt{2}\}$ or $\lambda^2-2\lambda-7=\delta_{nh}\Leftrightarrow \lambda\in \{-1.841.., 3.841..\}$.

When $\lambda\in (-\infty, 1-2\sqrt{2}]\cup [1+2\sqrt{2}, \infty)$, Lemma \ref{main-stability-777} implies that the stability of the 3-cycles of \eqref{logistic-9945} is as follows:
\begin{itemize}
\item If $\lambda\in (-\infty, -1.841..)\cup (3.841..,\infty)$, then both cycles $\mathcal{C}_{\left<\lambda^2-2\lambda-7\right>}$ and $\mathcal{C}_{\left<-\lambda^2+2\lambda+7\right>}$ are unstable.
\item If $\lambda\in [-1.841..,1-2\sqrt{2})\cup (1+2\sqrt{2}, 3.841..]$ then $\mathcal{C}_{\left<\lambda^2-2\lambda-7\right>}$ is asymptotically stable and $\mathcal{C}_{\left<-\lambda^2+2\lambda+7\right>}$ is unstable.
\item If $\lambda\in \{1-2\sqrt{2},1+2\sqrt{2}\}$, then $\mathcal{C}_{\left<0\right>}$ is unstable.
\end{itemize}

In the study of the logistic map for $\lambda>0$, the papers \cite{saha}, \cite{bechhoefer}, \cite{gordon}, \cite{calvis}, \cite{burm} and \cite{zhang} provide elementary proofs that $1+2\sqrt{2}$ represents the smallest value of the parameter $\lambda$ for which 3-cycles occur. In \cite{gordon} it is also shown that $3.841..$ is the largest value of $\lambda$ for which a stable 3-cycle exists.

\appendix

\section{Key polynomials}\label{polynomials-0987}
We consider the following families of polynomials:
\begin{equation}\label{Q-def-321}
Q_{\beta}(X)=X^3+(1-\beta)X^2-(2+\beta)X-1,\,\,\beta\in \R
\end{equation}
and
\begin{equation}\label{P-9087}
P_{\alpha}(X):=X^6+2X^5-(\alpha+3) X^4-2(\alpha+3) X^3+(2-\alpha)X^2+4X+1,\,\,\alpha\in \R.
\end{equation}

We present the properties of interest of these families of polynomials.

\begin{lem}\label{Q-prop-995}
Let be $\beta, \gamma\in \R$. 
\begin{enumerate}[(i)]
\item All roots of $Q_{\beta}$ are real, and their ordering satisfies $q^{\left<\beta\right>}_1<-1<q^{\left<\beta\right>}_2<0<q^{\left<\beta\right>}_3.$ 
\item If $q$ is a root of $Q_{\beta}$, then other two roots are $-\frac{1}{q}-1$ and $-\frac{1}{q+1}$.
\item If $\beta<\gamma$, then $q^{\left<\beta\right>}_1<q^{\left<\gamma\right>}_1<-1<q^{\left<\beta\right>}_2<q^{\left<\gamma\right>}_2<0<q^{\left<\beta\right>}_3<q^{\left<\gamma\right>}_3.$
\end{enumerate}
\end{lem}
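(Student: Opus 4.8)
The plan is to prove the three parts almost independently, using only elementary sign analysis of the cubic $Q_{\beta}$ defined in \eqref{Q-def-321} together with one Möbius-type identity. I would start by recording the two $\beta$-independent boundary values $Q_{\beta}(0)=-1<0$ and $Q_{\beta}(-1)=-1+(1-\beta)+(2+\beta)-1=1>0$, as well as the behaviour at infinity, $\lim_{X\to-\infty}Q_{\beta}(X)=-\infty$ and $\lim_{X\to+\infty}Q_{\beta}(X)=+\infty$. For part (i), the intermediate value theorem then produces at least one zero in each of $(-\infty,-1)$, $(-1,0)$, $(0,\infty)$; since $\deg Q_{\beta}=3$, these account for all roots, so there is exactly one in each interval, all three are real and distinct, and the ordering $q^{\left<\beta\right>}_1<-1<q^{\left<\beta\right>}_2<0<q^{\left<\beta\right>}_3$ is forced.

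For part (ii), the key object is the Möbius map $\phi(t)=-\tfrac1t-1$. A direct computation (clear denominators) gives the identity $t^{3}Q_{\beta}\!\bigl(-\tfrac1t-1\bigr)=Q_{\beta}(t)$ for all $t\neq 0$. Since neither $0$ nor $-1$ is a root of $Q_{\beta}$ (by the boundary values above), it follows that whenever $q$ is a root, so are $\phi(q)=-\tfrac1q-1$ and $\phi(\phi(q))=-\tfrac1{q+1}$. One checks $\phi^{3}=\mathrm{id}$ and that $\phi$ has no real fixed point (the equation $\phi(t)=t$ reduces to $t^{2}+t+1=0$), hence no real $2$-cycle either; therefore $q$, $-\tfrac1q-1$, $-\tfrac1{q+1}$ are pairwise distinct, and by part (i) they exhaust the root set.

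For part (iii), I would use the decomposition $Q_{\beta}(X)=Q_{0}(X)-\beta\,X(X+1)$, equivalently $Q_{\gamma}(X)=Q_{\beta}(X)-(\gamma-\beta)\,X(X+1)$. Fixing $\beta<\gamma$ and evaluating at $X=q^{\left<\beta\right>}_i$ yields $Q_{\gamma}\!\bigl(q^{\left<\beta\right>}_i\bigr)=-(\gamma-\beta)\,q^{\left<\beta\right>}_i\bigl(q^{\left<\beta\right>}_i+1\bigr)$. Since $X(X+1)>0$ on $(-\infty,-1)\cup(0,\infty)$ and $X(X+1)<0$ on $(-1,0)$, this value is negative when $q^{\left<\beta\right>}_i\in(-\infty,-1)\cup(0,\infty)$ and positive when $q^{\left<\beta\right>}_i\in(-1,0)$. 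Comparing with the boundary signs $Q_{\gamma}(-1)=1>0$, $Q_{\gamma}(0)=-1<0$, $Q_{\gamma}(\mp\infty)=\mp\infty$, and using the uniqueness of the root of $Q_{\gamma}$ in each of the three intervals from part (i), I can locate $q^{\left<\gamma\right>}_i$ strictly between $q^{\left<\beta\right>}_i$ and the appropriate endpoint, giving the interlacing $q^{\left<\beta\right>}_1<q^{\left<\gamma\right>}_1<-1<q^{\left<\beta\right>}_2<q^{\left<\gamma\right>}_2<0<q^{\left<\beta\right>}_3<q^{\left<\gamma\right>}_3$.

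I expect no serious obstacle: the only non-mechanical point is part (ii) — spotting the identity $t^{3}Q_{\beta}(-\tfrac1t-1)=Q_{\beta}(t)$, confirming that $\phi$ permutes the roots, and ruling out coincidences via the absence of real fixed points of $\phi$. Everything else is routine sign-chasing once the boundary values $Q_{\beta}(0)=-1$, $Q_{\beta}(-1)=1$ and the perturbation formula $Q_{\gamma}-Q_{\beta}=-(\gamma-\beta)X(X+1)$ are in hand.
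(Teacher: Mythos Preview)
Your proof is correct and, for parts (i) and (ii), matches or exceeds what the paper does: part (i) is exactly the paper's argument, and for part (ii) the paper gives no explicit proof at all, whereas your identity $t^{3}Q_{\beta}(-\tfrac1t-1)=Q_{\beta}(t)$ together with the observation that $\phi(t)=-\tfrac1t-1$ has order $3$ and no real fixed point is a clean way to establish it.

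For part (iii) there is a mild difference of method. The paper rewrites $Q_{\beta}(X)=Q_0(X)-\beta X(X+1)$, so that the roots of $Q_{\beta}$ are exactly the solutions of $h(x)=\beta$ with $h(x)=\dfrac{Q_0(x)}{x(x+1)}$; it then computes $h'(x)=\dfrac{(x^2+x+1)^2}{x^2(x+1)^2}>0$ to conclude that $h$ is strictly increasing on each of $(-\infty,-1)$, $(-1,0)$, $(0,\infty)$, which immediately gives the monotone dependence of each $q_i^{\langle\beta\rangle}$ on $\beta$. Your version avoids the derivative computation by evaluating $Q_{\gamma}$ at the old roots $q_i^{\langle\beta\rangle}$ and reading off signs---a perfectly valid and slightly more elementary alternative. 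One practical advantage of the paper's formulation is that the monotone function $h$ is reused later (in the proofs of Theorems~2.5 and~2.7) to translate conditions on $\delta$ into conditions on the ratio $r$; your sign-chasing argument proves the lemma just as well but does not set up that tool.
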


\begin{proof}
$(i)$ The following equalities: $Q_{\beta}(-1)=1$, $Q_{\beta}(0)=-1$, $\lim_{x\to \infty}Q_{\beta}(x)=\infty$ and  $\lim_{x\to -\infty}Q_{\beta}(x)=-\infty$ establish the claimed result. 

$(iii)$ Since $Q_{\beta}(X)=Q_0(X)-\beta X(X+1)$, we define $h:\R\backslash \{0,-1\}\to \R$, $h(x)=\frac{Q_0(x)}{x(x+1)}$. Its derivative $h'(x)=\frac{(x^2+x+1)^2}{x^2(x+1)^2}>0$ shows that $h$ is strictly increasing on each interval $(-\infty,-1)$, $(-1,0)$ and $(0,\infty)$.
\end{proof}

\begin{rem}\label{Q0-124}
The roots of $Q_0(X)=X^3+X^2-2X-1$ are $q^{\left<0\right>}_1\approx -1.801$, $q^{\left<0\right>}_2\approx -0.445$, and $q^{\left<0\right>}_3\approx 1.246$.
\end{rem}

Using the identity $P_{\alpha}(X)=(X^3+X^2-2X-1)^2-\alpha X^2(X+1)^2$ together with the above Lemma, we obtain the following results concerning the roots of $P_{\alpha}$.
\begin{lem}\label{P-prop-658}
Let $\alpha\in \R$. We distinguish the following cases.
\begin{enumerate}[(i)]
\item If $\alpha<0$, then $P_{\alpha}$ have no real roots.
\item If $\alpha\geq 0$, then $P_{\alpha}=Q_{\sqrt{\alpha}}Q_{-\sqrt{\alpha}}$ and all its roots are real. 
\end{enumerate}
\end{lem}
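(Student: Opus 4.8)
The plan is to exploit the factorization identity $P_{\alpha}(X)=Q_0(X)^2-\alpha X^2(X+1)^2$ that is stated just before the lemma, where $Q_0(X)=X^3+X^2-2X-1$. First I would verify this identity (a direct expansion, comparing it with the definition \eqref{P-9087}); this is routine bookkeeping and I expect no difficulty.

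For part $(i)$, suppose $\alpha<0$. Then for every real $x$ we have $P_{\alpha}(x)=Q_0(x)^2-\alpha x^2(x+1)^2$, a sum of a square and a nonnegative quantity (since $-\alpha>0$). I would argue this cannot vanish: if $P_{\alpha}(x)=0$ then both $Q_0(x)=0$ and $x^2(x+1)^2=0$, forcing $x\in\{0,-1\}$; but $Q_0(0)=-1\neq 0$ and $Q_0(-1)=1\neq 0$, a contradiction. Hence $P_{\alpha}$ has no real root.

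For part $(ii)$, suppose $\alpha\geq 0$ and write $\beta=\sqrt{\alpha}$, so $\alpha=\beta^2$. Then
$$P_{\alpha}(X)=Q_0(X)^2-\beta^2 X^2(X+1)^2=\bigl(Q_0(X)-\beta X(X+1)\bigr)\bigl(Q_0(X)+\beta X(X+1)\bigr).$$
Now I would compute $Q_0(X)\mp\beta X(X+1)$ directly from the definitions: $Q_0(X)-\beta X(X+1)=X^3+(1-\beta)X^2-(2+\beta)X-1=Q_{\beta}(X)$ and similarly $Q_0(X)+\beta X(X+1)=Q_{-\beta}(X)$, using the defining formula \eqref{Q-def-321}. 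This gives $P_{\alpha}=Q_{\sqrt{\alpha}}Q_{-\sqrt{\alpha}}$. Finally, reality of all six roots follows immediately from Lemma \ref{Q-prop-995}-$(i)$, which guarantees that each of $Q_{\sqrt{\alpha}}$ and $Q_{-\sqrt{\alpha}}$ has three real roots.

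There is no real obstacle here; the only thing requiring a little care is making sure the algebraic identities ($P_{\alpha}$ in terms of $Q_0$, and $Q_0\mp\beta X(X+1)=Q_{\mp\beta}$) are checked against the precise coefficient conventions in \eqref{Q-def-321} and \eqref{P-9087}. Everything else is an immediate consequence of the already-established Lemma \ref{Q-prop-995}.
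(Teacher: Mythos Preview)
Your proposal is correct and follows exactly the approach the paper intends: the identity $P_{\alpha}(X)=Q_0(X)^2-\alpha X^2(X+1)^2$ together with Lemma~\ref{Q-prop-995} is all that is used, and you have simply spelled out the details the paper leaves implicit. One small slip: in your closing sentence the signs are swapped --- it should read $Q_0(X)\mp\beta X(X+1)=Q_{\pm\beta}(X)$, as you computed correctly in the body of the argument.
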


\begin{lem}\label{propr-870}
Let $\beta\in \R$, and let $q^{\left<\beta\right>}_1, q^{\left<\beta\right>}_2, q^{\left<\beta\right>}_3$ be the roots of $Q_{\beta}$. Define $x^{\left<\beta\right>}_i=\frac{-b+\beta+1}{2a}+\frac{1}{aq^{\left<\beta\right>}_i}$, $i\in \{1,2,3\}$. Then 
$$H:=(2ax^{\left<\beta\right>}_1+b)(2ax^{\left<\beta\right>}_2+b)(2ax^{\left<\beta\right>}_3+b)=-\beta^3-\beta^2-7\beta+1.$$
\end{lem}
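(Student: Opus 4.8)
The plan is to reduce $H$ to a symmetric function of the roots $q_i:=q^{\langle\beta\rangle}_i$ of $Q_\beta$ and then evaluate it by Vieta's relations. First I would simplify each factor: substituting the definition of $x^{\langle\beta\rangle}_i$ gives
$$2a\,x^{\langle\beta\rangle}_i + b = \bigl(-b+\beta+1\bigr) + \frac{2}{q_i} + b = \beta + 1 + \frac{2}{q_i},$$
so that $H = \prod_{i=1}^{3}\bigl(\beta+1+\tfrac{2}{q_i}\bigr)$. Writing each factor as $\frac{(\beta+1)q_i+2}{q_i}$ and using that the product of the roots of $Q_\beta(X)=X^3+(1-\beta)X^2-(2+\beta)X-1$ equals $q_1q_2q_3=1$, the denominators cancel and
$$H = \prod_{i=1}^{3}\bigl((\beta+1)q_i+2\bigr).$$

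Next I would expand this product of three linear forms in the elementary symmetric functions of $q_1,q_2,q_3$:
$$H = (\beta+1)^3\,q_1q_2q_3 + 2(\beta+1)^2\!\!\sum_{1\le i<j\le 3}\!\! q_iq_j + 4(\beta+1)\sum_{i=1}^{3} q_i + 8.$$
By Vieta's formulas for $Q_\beta$ one has $\sum_i q_i = \beta-1$, $\sum_{i<j} q_iq_j = -(2+\beta)$, and $q_1q_2q_3 = 1$. Substituting these three identities gives
$$H = (\beta+1)^3 - 2(\beta+1)^2(\beta+2) + 4(\beta+1)(\beta-1) + 8,$$
and expanding and collecting powers of $\beta$ yields $H = -\beta^3-\beta^2-7\beta+1$, which is the claim.

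The whole argument is elementary. The only steps requiring a little care are the observation that $q_1q_2q_3=1$ (which is precisely what makes the denominator disappear cleanly) and the bookkeeping in the final one-variable expansion, where the coefficients collapse to $-1,-1,-7,1$. I do not anticipate any genuine obstacle.
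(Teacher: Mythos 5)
Your proof is correct and follows essentially the same route as the paper: both reduce $H=\prod_i\bigl(\beta+1+\tfrac{2}{q_i}\bigr)$ to the elementary symmetric functions of the roots and finish with Vieta's relations $s_1=\beta-1$, $s_2=-(2+\beta)$, $s_3=1$ for $Q_\beta$. The only cosmetic difference is that you factor out $(\beta+1)$ and clear denominators using $q_1q_2q_3=1$, whereas the paper expands directly in powers of $\beta$; the computation and conclusion are identical.
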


\begin{proof}
By direct computations we have:
\begin{align*}
H=\beta^3+(3+\frac{2s_2}{s_3})\beta^2+(3+\frac{4s_2}{s_3}+\frac{4s_1}{s_3})\beta+1+\frac{2s_2}{s_3}+\frac{4s_1}{s_3}+\frac{8}{s_3},
\end{align*}
where
$$\begin{cases}
s_1=q^{\left<\beta\right>}_1+q^{\left<\beta\right>}_2+ q^{\left<\beta\right>}_3 \\
s_2=q^{\left<\beta\right>}_1 q^{\left<\beta\right>}_2+ q^{\left<\beta\right>}_1 q^{\left<\beta\right>}_3+q^{\left<\beta\right>}_2 q^{\left<\beta\right>}_3 \\
s_3=q^{\left<\beta\right>}_1 q^{\left<\beta\right>}_2 q^{\left<\beta\right>}_3.
\end{cases}$$
 By Vieta's relations we have, $s_1=\beta-1$, $s_2=-\beta-2$, and $s_3=1$ which immediately implies the claimed result.
\end{proof}

\section{Schwarzian derivative}\label{Schwarzian-derivative}

The following notions and results are presented following the book \cite{devaney}.

Let $f:\R\to \R$ a nonconstant $\mathcal{C}^3$ map. The Schwarzian derivative of $f$ is the map $\mathcal{S}f:\R\backslash \{x\in \R\,|\,f'(x)=0\}\to \R$ defined by
\begin{equation}
\mathcal{S}f(x)=\frac{f'''(x)}{f'(x)}-\frac{3}{2}\left(\frac{f''(x)}{f'(x)}\right)^2.
\end{equation}
The chain rule for Schwarzian derivatives states that:
$$\mathcal{S}(f\circ h)(x)=\mathcal{S}f(h(x))+\mathcal{S}h(x),$$
where $f,h:\R\to \R$ are $\mathcal{C}^3$ maps and the expressions are defined at points where the derivatives do not vanish.

We say that $\mathcal{S}f<0$ if $\mathcal{S}f(x)<0$ for all $x$ in the domain of $\mathcal{S}f$.
\begin{lem}\label{schwarzin-composition-23}
If $\mathcal{S}f<0$ and $n\in \mathbb{N}^*$, then $\mathcal{S}(f^{(n)})<0$. 
\end{lem}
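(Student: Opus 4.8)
The plan is to argue by induction on $n$, with the chain rule for the Schwarzian derivative as the only tool. The base case $n=1$ is nothing to prove: $\mathcal{S}(f^{(1)})=\mathcal{S}f<0$ is exactly the hypothesis. Throughout, I would note that $f^{(n)}$ is again a $\mathcal{C}^3$ map, as a composition of $\mathcal{C}^3$ maps, so its Schwarzian derivative makes sense, the domain of $\mathcal{S}(f^{(n)})$ being $\{x\in\R\,:\,(f^{(n)})'(x)\neq 0\}$; if this set is empty, the assertion $\mathcal{S}(f^{(n)})<0$ holds vacuously, so we may assume it is nonempty.

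For the inductive step, assume $\mathcal{S}(f^{(n)})<0$ and write $f^{(n+1)}=f\circ f^{(n)}$. The first thing to settle is the bookkeeping of domains. By the ordinary chain rule, $(f^{(n+1)})'(x)=f'\!\big(f^{(n)}(x)\big)\cdot(f^{(n)})'(x)$, so a point $x$ with $(f^{(n+1)})'(x)\neq 0$ necessarily satisfies $(f^{(n)})'(x)\neq 0$ — hence $x$ lies in the domain of $\mathcal{S}(f^{(n)})$ — and $f'\!\big(f^{(n)}(x)\big)\neq 0$ — hence $f^{(n)}(x)$ lies in the domain of $\mathcal{S}f$. Thus, at every $x$ in the domain of $\mathcal{S}(f^{(n+1)})$, the quantities $\mathcal{S}f(f^{(n)}(x))$, $(f^{(n)})'(x)$ and $\mathcal{S}(f^{(n)})(x)$ are all well defined, and the chain rule for the Schwarzian derivative applies and yields
$$\mathcal{S}(f^{(n+1)})(x)=\mathcal{S}f\!\big(f^{(n)}(x)\big)\cdot\big((f^{(n)})'(x)\big)^2+\mathcal{S}(f^{(n)})(x).$$
Here $\big((f^{(n)})'(x)\big)^2>0$ because $(f^{(n)})'(x)\neq 0$, and $\mathcal{S}f\!\big(f^{(n)}(x)\big)<0$ by hypothesis, so the first summand is strictly negative; the second summand is strictly negative by the inductive hypothesis. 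Adding, $\mathcal{S}(f^{(n+1)})(x)<0$, which completes the induction.

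The only delicate point — and the step I would be most careful about — is the domain argument just described: one must verify that the points at which $\mathcal{S}(f^{(n+1)})$ is evaluated are exactly the points at which both terms on the right-hand side of the chain rule are defined, and that the weight $\big((f^{(n)})'(x)\big)^2$ is genuinely positive there rather than merely nonnegative, since this is what upgrades the conclusion from $\leq 0$ to the strict inequality $<0$. Everything else is a single application of an identity already recorded above.
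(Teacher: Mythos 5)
Your proof is correct, and it is the standard argument: induction on $n$ via the chain rule for the Schwarzian derivative, with the domain bookkeeping done carefully. The paper itself gives no proof of this lemma --- it is stated in the appendix as a known result following Devaney --- so there is nothing to compare against beyond noting that your induction is exactly the expected route. One point worth flagging: the chain rule as displayed in the paper reads $\mathcal{S}(f\circ h)(x)=\mathcal{S}f(h(x))+\mathcal{S}h(x)$, omitting the weight $(h'(x))^2$ on the first term; the correct identity, which you use, is $\mathcal{S}(f\circ h)(x)=\mathcal{S}f(h(x))\cdot(h'(x))^2+\mathcal{S}h(x)$. Your care in checking that this weight is strictly positive (rather than merely nonnegative) on the domain of $\mathcal{S}(f^{(n+1)})$ is precisely what makes the strict inequality go through, so your version is both correct and more scrupulous than the paper's own statement of the ingredient it relies on.
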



\begin{thebibliography}{99}
\bibitem{bechhoefer} J. Bechhoefer, The birth of period 3, revisited, Math. Mag. {\bf 69} (1996), 115–118.
\bibitem{burm} J. Burm, P. Fishback, Period-3 orbits via Sylvester’s theorem and resultants, Math. Mag. {\bf 74} (2001), 47–
51.
\bibitem{calvis} D. Calvis, Period three cycles of the logistic map, College Math. J. {\bf 56} (2025), 118–123.
\bibitem{casu} A. Bényi, I. Ca\c su, An elementary derivation of 3-cycles for a quadratic map, 2025, https://arxiv.org/abs/2510.12558
\bibitem{devaney} R.L. Devaney, A First Course in Chaotic Dynamical Systems. Theory and Experiments. Second Edition, CRC Press, 2020
\bibitem{hubbard} A. Douady and J.H. Hubbard, Étude dynamique des polynômes complexes, Publ. Math. d’Orsay, 1984–1985.
\bibitem{elaydi} S.N. Elaydi, Discrete chaos, second edition, with applications in science and engineering, Chapman \& Hall/CRC, Taylor \& Francis Group, 2007.
\bibitem{fatou} P. Fatou, Sur les équations fonctionnelles, Bull. Soc. Math. France {\bf 47} (1919), 48 (1920).
\bibitem{gordon} W.B. Gordon, Period three trajectories of the logistic map, Math. Mag. {\bf 69} (1996), 118–120.
\bibitem{katok} B. Hasselblatt, A. Katok, A first course in dynamics: with a panorama of recent developments, Cambridge
University Press, 2003.
\bibitem{julia} G. Julia, Mémoire sur l’itération des fonctions rationnelles, J. Math. Pures Appl. {\bf 8} (1918).
\bibitem{Li} Tien-Yien Li, J. A. Yorke, Period Three Implies Chaos, Amer. Math. Monthly {\bf 82} (1975), 985-992.
\bibitem{mandelbrot} B. Mandelbrot, The Fractal Geometry of Nature, W. H. Freeman, 1982.
\bibitem{may} R.M. May, Simple mathematical models with very complicated dynamics. Nature {\bf 261} (1976), 459–467.
\bibitem{milnor} J. Milnor, Dynamics in One Complex Variable, 3rd ed., Princeton University Press, 2006.
\bibitem{saha} P. Saha, S.H. Strogatz, The birth of period 3, Math. Mag. {\bf 68} (1995), 42–47.
\bibitem{Sharkovskii} A.N. Sharkovskii, Coexistence of cycles of continuous mapping of the line into itself, Ukrainian Math.
J. {\bf 16} (1964), 61–71.
\bibitem{verhulst} P.-F. Verhulst, Notice sur la loi que la population suit dans son accroissement, Correspondance Mathématique et Physique, {\bf 10} (1838), 113–121.
\bibitem{zhang} C. Zhang, Period three begins, Math. Mag. {\bf 83} (2010), 295–297.
\end{thebibliography}
\end{document}